\documentclass[12pt, reqno]{amsart}
\usepackage{amsmath}
\usepackage{amssymb}
\usepackage{epsfig}
\usepackage{graphicx}
\usepackage{color}
\definecolor{shadecolor}{gray}{0.875}
\usepackage{amscd}
\usepackage{comment}
\usepackage{enumitem}
\usepackage{mathrsfs}
\usepackage{MnSymbol}

\usepackage{bm}

\usepackage[colorlinks=true]{hyperref}

\numberwithin{equation}{section}

\newcommand{\sho}[1]{{\color{red} \sf $\clubsuit\clubsuit\clubsuit$ Sho: [#1]}}

\input xy
\xyoption{all}

\calclayout
\allowdisplaybreaks[3]

\theoremstyle{plain}
\newtheorem{prop}{Proposition}[section]

\newtheorem{theo}[prop]{Theorem}
\newtheorem{coro}[prop]{Corollary}

\theoremstyle{definition}
\newtheorem{defi}[prop]{Definition}

\newtheorem{conj}[prop]{Conjecture}

\newtheorem{rema}[prop]{Remark}

\def\Mor{\mathrm{Mor}}

\def\Supp{\mathrm{Supp}}

\makeatother
\makeatletter

\author{Brian Lehmann}
\address{Department of Mathematics \\
Boston College  \\
Chestnut Hill, MA \, \, 02467}
\email{lehmannb@bc.edu}

\author{Sho Tanimoto}
\address{Graduate School of Mathematics, Nagoya University, Furocho Chikusa-ku, Nagoya, 464-8602, Japan}
\email{sho.tanimoto@math.nagoya-u.ac.jp}

\title[Free Campana curves]{Fano orbifolds admit free Campana curves}

\begin{document}
\date{\today}

\begin{abstract}
We prove that any Fano orbifold admits a free Campana curve in the sense of Campana. Moreover, assuming that any klt log Fano pair admits a very free rational curve in its smooth locus, we prove that any Fano orbifold is Campana rationally connected.
As an application, we prove the finiteness of the orbifold fundamental groups for Fano orbifolds.
\end{abstract}

\maketitle

\section{Introduction}

We work over an algebraically closed field $\mathbf k$ of characteristic $0$.
Varieties are denoted by $\underline{X}$ and log schemes are denoted by $X$.
Fr\'ed\'eric Campana introduced the study of Campana orbifolds in \cite{Campana04, Campana07}, and this notion plays an important role in arithmetic geometry and birational geometry.
Here is the definition of a Campana orbifold:

\begin{defi}
Let $\underline{X}$ be a normal projective variety defined over $\mathbf k$ and $D = \sum_{i=1}^{n} D_i$ be a reduced divisor on $\underline{X}$.
For each $i$, we pick a positive integer $m_i$ and set
\[
D_\epsilon = \sum_i\left(1 - \frac{1}{m_i}\right)D_i.
\]
When $(\underline{X}, D_\epsilon)$ is klt, we call the pair a {\it klt Campana orbifold}.
\end{defi}

Numerous studies and conjectures in arithmetic and birational geometry naturally fit into this broader context, and there are various recent studies on these, e.g., \cite{BY21, PSTVA, PS24, CLTBT24, CLT26, CLT26toric}.   In the geometric setting, the main object of study is a {\it Campana curve}: a morphism $s : \underline{C} \to \underline{X}$ from a smooth projective curve such that the image is not contained in the support of $D$, is contained in the log smooth locus of $(\underline{X}, D)$, and for every $i$ all intersection points of $\underline{C}$ with $D_i$ have multiplicity $\geq m_i$.

Just as for projective varieties, there is expected to be a close connection between the existence of Campana curves and the positivity of the log anticanonical divisor.  In particular, suppose that the Campana orbifold $(\underline{X}, D_\epsilon)$ is a weak Fano orbifold, i.e.~a klt Campana orbifold such that $-(K_{\underline{X}} + D_\epsilon)$ is big and nef.  The following conjecture is of fundamental importance in this area.

\begin{conj} \label{conj:introconj}
Any weak Fano orbifold is Campana rationally connected.
\end{conj}

A klt Campana orbifold is Campana rationally connected if there is a Campana rational curve $s: \underline{\mathbb{P}^{1}} \to \underline{X}$ passing through two general points on $\underline{X}$.  This is equivalent to asking for the existence of a very free Campana rational curve in the log smooth locus of $(\underline{X}, D)$ as in \cite[Section 4]{CLT26}.  Finding a very free Campana rational curve is a formidable task, as attested by  \cite{Campana04,Campana07, Campana11} and \cite{CLT26, Dand1, Dand2}.

In this paper we prove an analogue of Conjecture \ref{conj:introconj} for curves of higher genus.  The notion of ``relatively $r$-free'' for higher genus Campana curves is analogous to very free for Campana rational curves; see Definition \ref{defi:relativelyrfree}.

\begin{theo}
\label{theo:main}
Let $(\underline{X}, \sum_{i } (1-\frac{1}{m_i})D_i)$ be a weak Fano orbifold. Then there exists a smooth projective curve $\underline{C}$ such that for any $r \geq 0$, there exists a Campana curve $s : \underline{C} \to (\underline{X}, D)^{\mathrm{sm}}$ in the log smooth locus of $(\underline{X}, D)$ such that $s$ is relatively $r$-free.
\end{theo}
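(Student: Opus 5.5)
The plan is to turn Campana curves into ordinary curves on an orbifold chart, and then prove freeness by the Frobenius–lifting argument of Mori and Kollár, applied to the orbifold tangent sheaf.

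\textbf{Step 1 (orbifold chart).} Use a Kawamata covering $\pi:\underline{Y}\to\underline{X}$, Galois with group $G$, defined over the log smooth locus (after shrinking, or on a compactification). I want it ramified with index exactly $m_i$ along each $D_i$, and elsewhere only along general auxiliary divisors $H_j$ with indices chosen so that $\underline{Y}$ is smooth over $(\underline{X},D)^{\mathrm{sm}}$. Any morphism $\underline{C}\to \underline{Y}$ avoiding the $H_j$ composes with $\pi$ to a Campana curve, because every local intersection multiplicity with $D_i$ is divisible by $m_i$. The relevant deformation sheaf is the orbifold tangent sheaf $T_{(X,D_\epsilon)}$, i.e.\ the $G$-invariant part of $\pi_*T_Y$. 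Relative $r$-freeness of $s$ in the sense of Definition~\ref{defi:relativelyrfree} should reduce to $H^1$-vanishing of $s^*T_{(X,D_\epsilon)}(-p_1-\dots-p_{r})$ for general points $p_j$, twisted by a fixed ample on $\underline{C}$. Its degree is $-(K_{\underline{X}}+D_\epsilon)\cdot s_*\underline{C}$, which is positive on movable curves because $-(K_{\underline{X}}+D_\epsilon)$ is big and nef.

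\textbf{Step 2 (fix $\underline{C}$ and reduce mod $p$).} Fix once and for all a smooth curve $\underline{C}$ together with a Campana map $s_0:\underline{C}\to \underline{X}$. For instance, take the image under $\pi$ of a general complete intersection curve in $\underline{Y}$, made $G$-equivariant as needed. Spread the whole situation out over a finitely generated ring and reduce modulo a large prime $p$. Composing $s_0$ with the $e$-th Frobenius of $\underline{C}$ keeps the genus fixed. The composite is still Campana, since multiplicities get multiplied by $p^e$. Meanwhile the orbifold anticanonical degree grows like $p^e$.

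\textbf{Step 3 (Riemann–Roch and bend-and-break).} The expected dimension of the space of Campana maps $\underline{C}\to(\underline{X},D)$ through $r$ fixed general points is at least
\[
-(K_{\underline{X}}+D_\epsilon)\cdot s_*\underline{C}-r\dim \underline{X}-(g-1)\dim\underline{X}.
\]
This becomes arbitrarily large after the Frobenius twist. Since $-(K_{\underline{X}}+D_\epsilon)$ is big, I would run bend-and-break on $\underline{Y}$ with $G$-invariance, which lets me avoid rational components. The aim is that a general member of a dominating family through the $r$ points has $s^*T_{(X,D_\epsilon)}$ with all Harder–Narasimhan slopes larger than $2g+r$. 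I would then take $\underline{C}$ to be the Frobenius-twisted curve, which is independent of $r$, and vary only the map. This is the step I expect to be the main obstacle. Mori's argument controls positivity of $s^*T$ only generically. Getting the full $H^1$-vanishing for the orbifold sheaf, uniformly for all $r$ on one fixed curve, needs care at the points where $s$ meets $D$ with higher multiplicity. There the invariant subsheaf and the log tangent sheaf $T_{\underline{X}}(-\log D)$ differ. My first attempt would be to compare $s^*T_{(X,D_\epsilon)}$ with $s^*T_{\underline{X}}(-\log D)\otimes\mathcal O(\text{correction})$, and check that the correction has degree bounded independently of $e$.

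\textbf{Step 4 (lift to characteristic $0$).} Once $H^1(\underline{C},s^*T_{(X,D_\epsilon)}(-\textstyle\sum p_j))=0$ holds mod $p$, the deformation theory of Campana maps with fixed source is unobstructed. Campana maps are log maps to the root stack, and these are governed by this sheaf. So $s$ lifts over the spread-out base, and relative $r$-freeness is an open condition. This yields a relatively $r$-free Campana curve in characteristic $0$ from a curve $\underline{C}$ independent of $r$. Staying in $(\underline{X},D)^{\mathrm{sm}}$ is also open, so it is preserved under the lift.
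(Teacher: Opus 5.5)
There is a genuine gap, and it is exactly the step you flagged. Step~3 carries the entire content of the theorem, and none of the tools you name can supply it.

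Mori's Frobenius trick only gives a lower bound on the dimension of a space of maps. Bend-and-break then degenerates such families to \emph{produce} rational curves; it does not let you avoid them. It also says nothing about the Harder--Narasimhan slopes of a genus-$g$ map. The Frobenius composites go the wrong way: $\mu_{\min}((s_0\circ F^e)^*T)\le p^e\mu_{\min}(s_0^*T)$. So twisting can only amplify positivity that $s_0$ already has, which is what you are trying to prove. In characteristic $p$ you also cannot pass to a general member of a large, or even dominating, family and expect freeness, because generic smoothness fails (uniruled does not imply separably uniruled). So Step~4 has nothing unobstructed to lift.

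Step~1 is not a usable chart either. A Kawamata cover is necessarily branched along auxiliary divisors $H_j$ as well. Hence $K_{\underline{Y}}=\pi^*(K_{\underline{X}}+D_\epsilon+\sum_j(1-\frac{1}{n_j})H_j)$ has lost the Fano-type positivity. Moreover $(\pi_*T_{\underline{Y}})^G$ is $T_{\underline{X}}(-\log(D+\sum_j H_j))$, not an orbifold tangent sheaf for $(X,D_\epsilon)$.

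The missing idea is a chart that converts ordinary curves into divisible Campana curves \emph{without losing positivity}. Then an existing freeness theorem can be imported rather than reproved. The paper does this inductively with a Seifert-type construction: a degree $am_j$ cyclic cover of $\mathbb{P}_{\underline{Y}_j}(\mathcal{L}\oplus\mathcal{O})$ branched along $Y_0+(am_j-1)Y_\infty+aq^*f_j^*D_j$. Here the extra ramification is absorbed by the two sections instead of by ample auxiliary divisors.

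The output is a klt log Fano pair $(\underline{Y},\Delta)$ with a dominant equidimensional $f:\underline{Y}\to\underline{X}$ such that $\frac{1}{m_i}f^*D_i$ is integral. In addition, $-(K_{\underline{Y}}+\Delta)\sim_{\mathbb{Q}}-f^*(K_{\underline{X}}+D_\epsilon)+A$ is ample with $A$ relatively ample.

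Because $\underline{Y}$ is only klt, the paper then invokes \cite[Theorem 1.3]{JLR25} to get relatively $(2g+r)$-free curves from a fixed $\underline{C}$ into $\underline{Y}^{\mathrm{sm}}$. Composing general deformations with $f$ gives divisible Campana curves in the log smooth locus whose graphs pass through $2g+r+1$ general points of $X\times\underline{C}$. The argument of \cite[Lemma 4.7]{CLT26} then gives relative $r$-freeness.
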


One easy way to construct such a curve is the following situation:
suppose that we have a finite cover $f : \underline{Y} \to \underline{X}$ from a smooth projective variety such that for any $i$, $\frac{1}{m_i}f^*D_i$ is an integral reduced divisor and there is no other ramification. Then we have
\[
-K_{\underline{Y}} = -f^*(K_{\underline{X}} + D_\epsilon).
\]
Thus $\underline{Y}$ is a smooth weak Fano variety. In particular, it is rationally connected.
Then since $f^*D_i$ is divisible by $m_i$, any curve on $\underline{Y}$ yields a Campana curve on $\underline{X}$. However, such a finite cover rarely exists in general.

The starting point of this paper is the notion of Seifert bundles which was introduced in \cite{OW75} and recast in \cite{Kollar04, Kollar07} in the context of Sasaki--Einstein metrics.
This construction gives the following fibration: suppose that we have a complex manifold $\underline{X}$ and an irreducible divisor $D \subset \underline{X}$.
Then one can construct a $\mathbb G_m$-torsor $q : \underline{W} \to \underline{X}$ and a degree $m$ sheeted covering $\pi : \underline{Y} \to \underline{W}$ branching along $q^*D$ with multiplicity $m$.  Similar constructions have been introduced and studied under many different names and perspectives: graded rings (\cite{Demazure88}, \cite{Watanabe81}), T-varieties (\cite{AH06}, \cite{AIPSV12}, \cite{LS13}), quotients of Cox rings (\cite{ABHW18}, \cite{BM24}), and orbifold cones (often following Koll\'ar's work, see e.g.~\cite{LL19}). 

Inspired by these constructions, we give a direct and elementary proof of the following theorem:

\begin{theo}[Corollary~\ref{coro:auxilarilylogFano}]
\label{theo:auxilarilylogFanoIntro}
Let $(\underline{X}, \sum_{i=1}^{n}(1-\frac{1}{m_i})D_i)$ be a $\mathbb{Q}$-factorial weak Fano orbifold.
Then there exist a klt log Fano pair $(\underline{Y}, \Delta)$ and a dominant equidimensional projective morphism
\[
f: \underline{Y} \to \underline{X}
\]
such that
\begin{itemize}
\item for $1 \leq i \leq n$, the divisor $\frac{1}{m_i}f^*D_i$ is an integral reduced divisor; and
\item there exists a $\mathbb Q$-Cartier divisor $A$ on $\underline{Y}$ such that $A$ is $f$-relatively ample and we have
\[
-(K_{\underline{Y}} + \Delta) \sim_{\mathbb Q} -f^*\left(K_{\underline{X}} + \sum_i\left(1-\frac{1}{m_i}\right)D_i\right) + A,
\]
which is ample.
\end{itemize}
\end{theo}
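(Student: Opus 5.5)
The plan is to build $\underline{Y}$ as a relative weighted projective (toric) bundle over $\underline{X}$, twisted by the orbifold structure, in the spirit of Seifert bundles and orbifold cones. Since $\underline{X}$ is $\mathbb{Q}$-factorial, each $D_i$ is $\mathbb{Q}$-Cartier. Consider the $\mathbb{Z}^{n}$-graded, finitely generated $\mathcal O_{\underline{X}}$-algebra
\[
\mathcal R=\bigoplus_{k\in\mathbb{Z}_{\ge 0}^{n}}\mathcal O_{\underline{X}}\Bigl(\Bigl\lfloor \textstyle\sum_i \tfrac{k_i}{m_i}D_i\Bigr\rfloor\Bigr).
\]
Its relative $\mathrm{Spec}$ is a Seifert-type cone over $\underline{X}$ on which the function $t_i$ of degree $e_i$ satisfies $t_i^{m_i}=$ (a local equation of $D_i$). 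Adding one more grading variable and taking $\mathrm{Proj}_{\underline{X}}$ with respect to the total degree gives a projective morphism $f:\underline{Y}\to\underline{X}$. Its fibres are all $n$-dimensional toric varieties, namely $\mathbb{P}^n$ over the locus where no $D_i$ passes and finite quotients of it in general, so $f$ is equidimensional and dominant.

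The key properties come in the following order.
\begin{enumerate}
\item \emph{Branching.} Let $E_i=(t_i=0)$. Then $f^*D_i=m_iE_i$ with $E_i$ integral and reduced, and $f$ is unramified in codimension one elsewhere apart from the toric structure. This is checked étale-locally over a general point of $D_i$, where $\underline{Y}$ is a $\mu_{m_i}$-cover of a $\mathbb{P}^n$-bundle.
\item \emph{Canonical class.} Let $B_0,\dots,B_n$ be the $n+1$ torus-invariant boundary divisors of the fibres; $B_i$ is the closure of $(t_i=0)$ and $B_0$ is the section at infinity. Relative toric adjunction gives $K_{\underline{Y}/\underline{X}}+\sum_j B_j\sim_{\mathbb{Q}}$ the ramification contribution. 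Combining this with Hurwitz along the $E_i$, and using $(m_i-1)E_i=f^*\bigl((1-\tfrac1{m_i})D_i\bigr)$, should give
\[
K_{\underline{Y}}+\textstyle\sum_j B_j\sim_{\mathbb{Q}} f^*\bigl(K_{\underline{X}}+D_\epsilon\bigr).
\]
\item \emph{Boundary and $A$.} Since $-(K_{\underline{X}}+D_\epsilon)$ is big and nef and the pair is klt, write $-(K_{\underline{X}}+D_\epsilon)\sim_{\mathbb{Q}} H+\varepsilon E$ with $H$ ample, $E\ge 0$ and $\varepsilon$ small, so that $(\underline{X},D_\epsilon+\varepsilon E)$ is still klt. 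Put
\[
\Delta=(1-\delta)\textstyle\sum_j B_j+\varepsilon f^*E,\qquad A=\delta\textstyle\sum_j B_j-\varepsilon f^*E .
\]
Then $A$ is $f$-ample, because $\sum_j B_j$ restricts to an ample toric divisor on every fibre and $f^*E$ is $f$-trivial. The displayed identity holds by (2), and $-(K_{\underline{Y}}+\Delta)\sim_{\mathbb{Q}} f^*H+\delta\sum_j B_j$, which is ample for $0<\delta\ll 1$ because $H$ is ample on $\underline{X}$ and $\sum_j B_j$ is $f$-ample.
\end{enumerate}

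The main obstacle is showing that $(\underline{Y},\Delta)$ is klt. Over the locus where $\underline{X}$ is smooth and $D$ is snc, I would argue étale locally. There $\underline{Y}$ is a quotient of a $\mathbb{P}^n$-bundle over a Kummer cover $\underline{X}'\to\underline{X}$ branched exactly along the $D_i$ to order $m_i$, and $\Delta$ has boundary coefficients $1-\delta<1$ on the snc toric boundary. Over the singular locus of $\underline{X}$, the cone construction is locally the quotient $(\underline{X}'\times\mathbb{A}^{n})/G$. I would use the standard fact that klt is preserved under finite quotients and covers when log canonical divisors pull back, i.e.\ $K_{\underline{X}'}+\Delta'=\pi^*(K_{\underline{X}}+D_\epsilon+\varepsilon E)$. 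This reduces the claim to klt-ness of (klt base)$\times$(toric with coefficients $1-\delta$), which is standard. Verifying that the local Kummer cover is compatible with the global graded construction, in particular that $\mathcal R$ is finitely generated and that the local quotient description holds at non-Cartier points of the $D_i$, is where I expect the real work to lie.
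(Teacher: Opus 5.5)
Your argument is correct, but it is organized quite differently from the paper's.

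\textbf{The paper's route.} The paper never forms a multigraded algebra. It absorbs one $D_j$ at a time (Theorem~\ref{theo:induction}):
\begin{itemize}
\item it takes the $\mathbb{P}^1$-bundle $\mathbb{P}_{\underline Y_j}(\mathcal L\oplus\mathcal O)$ with $L=am_jB-af_j^*D_j$ ample;
\item it takes the normalized degree-$am_j$ cyclic cover branched along $Y_0+(am_j-1)Y_\infty+aq^*f_j^*D_j$;
\item it carries the not-yet-absorbed terms $(1-\frac{1}{m_i})f_j^*D_i$ along in $\Delta$.
\end{itemize}
Ampleness comes from $\frac{1}{am_j}\pi^*\xi$ being big and nef with augmented base locus finite over $\underline Y_j$. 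Klt-ness comes from inversion of adjunction along $Y_0\cup Y_\infty$ together with \cite[Proposition 5.20]{KM98}, applied to a single global cyclic cover.

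\textbf{Your route.} You treat all $n$ divisors at once, as a relative toric compactification of the Seifert $\mathbb{G}_m^n$-bundle; both constructions have relative dimension $n$. This buys the exact identity $K_{\underline Y}+\sum_jB_j\sim_{\mathbb Q}f^*(K_{\underline X}+D_\epsilon)$. It does hold: for instance, the form $\alpha\wedge\bigwedge_i d\log(t_i/t_0)$ vanishes to order exactly $m_i-1$ along $f^{-1}(D_i)$. After that, the choice of $\Delta$ and $A$ and the ampleness check take one line each, with no auxiliary $a$ or $B$.

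The price is the local theory you flag: finite generation and normality of $\mathcal R$, and the description $\underline Y\cong(\underline X'\times\mathbb P^n)/G$ near any point. Here $G=\mathrm{Hom}(\mathbb Z^n/\Lambda,\mathbb G_m)$, and $\Lambda$ is the lattice of $k$ for which $\sum_ik_iD_i/m_i$ is Cartier at that point. This description does hold: the units $g_\lambda^{-1}$ of degree $\lambda\in\Lambda$ give, locally, $\mathcal R[t_0]\cong(\mathcal O_{\underline X'}[T_0,\dots,T_n])^G$. It is precisely the $T$-variety input that the paper's elementary induction is designed to avoid.

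\textbf{Two points to tighten.}
\begin{enumerate}
\item Your reduction to $\underline X'\times\mathbb P^n$ needs $\rho\colon\underline X'\times\mathbb P^n\to\underline Y$ to be étale in codimension one. This holds for the following reasons.
\begin{itemize}
\item A nontrivial $g\in G$ fixes no component of $\underline X'$ pointwise, because the generic fibre of $\underline X'\to\underline X$ is a $G$-torsor.
\item Such a $g$ acts nontrivially on $\mathbb P^n$, because the $e_i$ generate $\mathbb Z^n/\Lambda$ and $T_0$ is fixed.
\item Hence its fixed locus has codimension $\ge 2$.
\end{itemize}
Only with this does $\rho^*(K_{\underline Y}+\Delta)$ equal $K_{\underline X'\times\mathbb P^n}+\varepsilon\, p_1^*\pi^*E+(1-\delta)\sum_j p_2^*H_j$, where the $H_j$ are the coordinate hyperplanes. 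That is the equality you need in order to invoke \cite[Proposition 5.20]{KM98}.
\item On the cone, $t_i^{m_i}=d_is_i$, where $s_i$ is the degree-$m_ie_i$ generator; it is a coordinate, not a unit. So $(t_i=0)=E_i+B_i$, and you should define $E_i=f^{-1}(D_i)_{\mathrm{red}}$ instead. The relation $\mathrm{div}(t_i/t_0)=E_i+B_i-B_0$ is also what shows each $B_i$ is $\mathbb Q$-Cartier. You need that both for $A$ and for the ampleness of $f^*H+\delta\sum_jB_j$.
\end{enumerate}
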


Our main theorem can be obtained by combining the previous construction with the following result of \cite{JLR25}.

\begin{theo}[{\cite[Theorem 1.3]{JLR25}}]
Let $(\underline{X}, \Delta)$ be a klt log Fano pair. Then there exists a smooth projective curve $\underline{C}$ such that for any $r \geq 0$, there exists a morphism $s : \underline{C} \to \underline{X}^{\mathrm{sm}}$ to the smooth locus of $\underline{X}$ such that $s$ is relatively $r$-free.
\end{theo}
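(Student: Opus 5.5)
The plan is to (i) produce one curve in the smooth locus on which $T_{\underline{X}}$ restricts to an ample bundle, and (ii) upgrade that map, with the source curve held fixed, to relatively $r$-free maps for every $r$; step (ii) is where I expect the real difficulty. Throughout, $\underline{X}^{\mathrm{sm}}$ denotes the smooth locus.

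\emph{Step (i).} Since $(\underline{X},\Delta)$ is klt, its singular locus has codimension at least two. Hence a general complete intersection curve $\underline{C}=H_1\cap\cdots\cap H_{\dim X-1}$ of sufficiently divisible very ample divisors lies in $\underline{X}^{\mathrm{sm}}$ and avoids any prescribed codimension-two locus. On such a curve I would use generic positivity of the tangent sheaf of a klt log Fano pair. By Campana--P\u{a}un, and its log version, every torsion-free quotient of $T_{\underline{X}}$ has positive slope with respect to any movable class. One then applies Mehta--Ramanathan restriction to the Harder--Narasimhan filtration of $T_{\underline{X}}$ for the polarization $(H_1,\dots,H_{\dim X-1})$. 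This should show that $T_{\underline{X}}|_{\underline{C}}$ is ample for the inclusion $s_0:\underline{C}\to \underline{X}^{\mathrm{sm}}$. This fixes the curve $\underline{C}$ of the statement; its genus $g$ never changes afterwards.

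\emph{Step (ii).} Ampleness of $s_0^*T_{\underline{X}}$ alone does not give $H^1(\underline{C}, s^*T_{\underline{X}}(-p_1-\dots-p_{r+1}))=0$ for general points $p_j$. We need the minimal slope of $s^*T_{\underline{X}}$ to exceed roughly $2g-1+r$. I would raise the slopes while keeping $\underline{C}$ fixed by a degeneration-and-smoothing argument, working in $\Mor(\underline{C},\underline{X}^{\mathrm{sm}})$.
\begin{itemize}
\item Form a nodal curve consisting of $s_0(\underline{C})$ with many free curves glued at general points of $s_0(\underline{C})$. These attached curves should be drawn from a covering family of curves lying in $\underline{X}^{\mathrm{sm}}$ on which $T_{\underline{X}}$ is ample; such families are provided by Step (i) applied to other complete intersections.
\item The attached components must have arithmetic genus zero so that the smoothing is again a map from $\underline{C}$. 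To arrange this, I would replace each attached curve by a contracted or rational tail. This requires that through a general point of $\underline{X}^{\mathrm{sm}}$ there exist rational curves whose normal directions can be controlled in the smooth locus. For that I would first try Miyaoka--Mori bend-and-break. It applies because $-(K_{\underline{X}}+\Delta)\cdot \underline{C}>0$ while $\Delta\ge 0$, and it produces rational curves through the general points of $s_0(\underline{C})$. These curves must then be kept inside $\underline{X}^{\mathrm{sm}}$, using the fact that the singular locus has codimension at least two together with a general-position argument for the bend-and-break families.
\item Finally, I would smooth the comb, keeping the component $\underline{C}$ and its complex structure fixed, by Kollár's comb-smoothing. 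Each attached tail adds degree, and hence slope, in a positive direction. Attaching enough tails raises the minimal slope past $2g-1+r$, which gives relatively $r$-free maps $\underline{C}\to\underline{X}^{\mathrm{sm}}$ for every $r$.
\end{itemize}

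\emph{Main obstacle.} The main obstacle is producing enough rational tails inside $\underline{X}^{\mathrm{sm}}$. Bend-and-break may push rational curves into the singular locus, and very free rational curves in the smooth locus are exactly the open problem mentioned in the abstract. My fallback is to avoid needing very freeness of the tails. Free tails should suffice, because in a comb with sufficiently many teeth the handle $\underline{C}$ already carries the ample positive part from Step (i). The needed statement is then that teeth which are merely free in $\underline{X}^{\mathrm{sm}}$, attached at general points of $s_0(\underline{C})$, yield a smoothing whose pulled-back tangent bundle has minimal slope growing linearly in the number of teeth.
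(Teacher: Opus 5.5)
The paper does not prove this statement; it imports it from \cite{JLR25}. Judged on its own, your Step (i) is a reasonable way to get one map $s_0\colon\underline{C}\to\underline{X}^{\mathrm{sm}}$ with $s_0^*T_{\underline{X}}$ ample, granting a precise reference for generic ampleness of $T_{\underline{X}}$ on klt log Fano pairs. Step (ii), however, has a genuine gap. Because the source $\underline{C}$ must stay fixed, the teeth of your comb must be rational, must lie in $\underline{X}^{\mathrm{sm}}$, and must be free there, so that the comb is unobstructed and smooths. Producing such rational curves through general points is exactly the open problem behind Conjecture~\ref{conj:freerationalcurveonsmoothlocus}. The paper only \emph{assumes} that conjecture for Theorem~\ref{theo:mainII}, and avoiding it is the whole reason higher-genus curves are used. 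Bend-and-break gives rational curves through general points of $s_0(\underline{C})$, but it gives no control over $\mathrm{Sing}(\underline{X})$. The fact that $\operatorname{codim}\mathrm{Sing}(\underline{X})\ge 2$ only lets you move off a codimension-two set curves that are already free in $\underline{X}^{\mathrm{sm}}$. It says nothing about a covering family all of whose members meet the singular locus, such as the rulings of a cone, which all pass through the vertex.

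Your fallback does not repair this. It still needs rational teeth inside $\underline{X}^{\mathrm{sm}}$. Even granting free teeth, smoothing a comb only performs positive elementary modifications of $s_0^*T_{\underline{X}}$ at the nodes, in the tangent directions of the teeth. If those directions lie in a proper distribution, say the teeth are fibres of a fibration $\phi\colon\underline{X}\dashrightarrow Z$, then the quotient $\phi^*T_Z$ gains no degree. So $\mu_{\min}$ stays bounded however many teeth you add.

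What is missing is a way to raise the degree of a map from a \emph{fixed} curve without adding genus and without leaving the smooth locus. One mechanism that works on top of your Step (i) is reduction to characteristic $p$ and Frobenius.
\begin{enumerate}
\item Spread $\underline{X},\underline{C},s_0$ out over a finitely generated $\mathbb{Z}$-algebra. Specialize to a closed point with residue field $\mathbb{F}_q$, $q=p^a$, of large characteristic $p$, at which the Harder--Narasimhan filtration of $E=s_0^*T_{\underline{X}}$ specializes.
\item Replace $s_0$ by $s_0\circ F^e$, where $F=F_{\mathrm{abs}}^a$ is the $q$-Frobenius endomorphism of $\underline{C}_{\mathbb{F}_q}$. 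The image and the genus $g\ge 1$ are unchanged.
\item Iterate Shepherd-Barron's bound $\mu_{\min}(F_{\mathrm{abs}}^*G)\ge p\,\mu_{\min}(G)-(\operatorname{rk}G-1)(2g-2)$. This shows $\mu_{\min}((s_0\circ F^e)^*T_{\underline{X}})\to\infty$ as soon as $(p-1)\mu_{\min}(E)>(\operatorname{rk}E-1)(2g-2)$.
\item Then $H^1$ vanishes, so the relative $\Mor(\underline{C},\underline{X}^{\mathrm{sm}})$ is smooth over the base at $[s_0\circ F^e]$. Its component through this point dominates the base, which gives maps in characteristic $0$ from the same $\underline{C}$.
\item The condition $\mu_{\min}\ge 2g+r$ is open in families, by properness of relative Quot schemes. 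Hence these lifted maps are relatively $r$-free for every $r$.
\end{enumerate}
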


\begin{rema}
\cite[Theorem 3.6]{JLR25} shows more. It says that any rational ray in the interior of the nef cone is spanned by a relatively very free curve. In particular, for a Fano orbifold $(X, D_\epsilon)$, one can find a family of Campana curves $s : \underline{C} \to \underline{X}$ going through two general points such that $s(\underline{C})$ is strictly nef, i.e., it intersects with any effective divisor positively.
\end{rema}

As an application, we prove the finiteness of orbifold fundamental groups for Fano orbifolds following \cite{JLR25}.  This gives a new proof of a special case of \cite[Theorem 2]{Brauer}.

\begin{theo} \label{intro:orbfundfinite}
Assume that $\mathbf k = \mathbb C$.
Let $(X,D_{\epsilon})$ be a weak Fano Campana orbifold.  Then the orbifold fundamental group $\pi_{1}^{\mathrm{orb}}(X,D_{\epsilon})$ is finite.
\end{theo}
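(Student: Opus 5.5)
The plan follows \cite{JLR25}: show that the orbifold fundamental group is a quotient of a finite group attached to a very free curve. By definition, $\pi_1^{\mathrm{orb}}(X,D_\epsilon)$ is the quotient of $\pi_1(\underline{X}^{\circ}\setminus D)$ by the normal subgroup generated by $\gamma_i^{m_i}$, where $\underline{X}^{\circ}$ is the smooth (or log smooth) locus and $\gamma_i$ is a small loop around $D_i$. The goal is to produce a finite-index subgroup statement from a family of Campana curves.

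First I reduce to the $\mathbb{Q}$-factorial case. One can take a small $\mathbb{Q}$-factorialization $\underline{X}'\to\underline{X}$, which is an isomorphism in codimension one. The pair stays weak Fano and klt, and the orbifold fundamental group, computed on the complement of a codimension $\geq 2$ set, does not change. I can then apply Theorem~\ref{theo:auxilarilylogFanoIntro} to get $f:\underline{Y}\to\underline{X}$ with $(\underline{Y},\Delta)$ klt log Fano and $f^*D_i = m_i E_i$ with $E_i$ reduced.

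The key step is to show that the image of $\pi_1^{\mathrm{orb}}$ of a general fibre-type open set of $\underline{Y}$ has finite index in $\pi_1^{\mathrm{orb}}(X,D_\epsilon)$. Let $\underline{Y}^{\circ}=f^{-1}(\underline{X}^{\circ}\setminus D)\cap \underline{Y}^{\mathrm{sm}}$, suitably shrunk. Since $f$ is equidimensional with general fibre connected (I would check this from the construction), the map $\pi_1(\underline{Y}^{\circ}) \to \pi_1(\underline{X}^{\circ}\setminus D)$ has finite-index image. A loop around $E_i$ maps to $\gamma_i^{m_i}$, which is trivial in the orbifold group. So the induced map factors through $\pi_1$ of the partial compactification $\underline{Y}^{\circ}\cup(E_i^{\circ})$, an open subset of $\underline{Y}^{\mathrm{sm}}$ whose complement has codimension $\geq 2$ after removing bad strata.

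It then suffices to know that $\pi_1$ of the smooth locus of a klt log Fano pair is finite. I would deduce this as in \cite{JLR25}, via Theorem 1.3 there. A relatively $r$-free curve $s:\underline{C}\to\underline{Y}^{\mathrm{sm}}$ with $r\ge 1$ deforms to pass through general points, and its deformations avoid codimension $\geq 2$ sets. A standard argument in the style of Koll\'ar's shows that the image of $\pi_1(\underline{C})$ has finite index in $\pi_1(\underline{Y}^{\mathrm{sm}})$. Varying $s$ among curves through a fixed point with enough freeness kills this image up to finite index; this is exactly the finiteness theorem of \cite{JLR25} for klt log Fano pairs. Alternatively I would cite finiteness of $\pi_1(\underline{Y}^{\mathrm{sm}})$ directly.

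Combining the steps, $\pi_1^{\mathrm{orb}}(X,D_\epsilon)$ contains a finite-index subgroup that is a quotient of a finite group, so it is finite. I expect the main obstacle to be the finite-index step. It needs connectedness and non-multiple general fibres for $f$ over $\underline{X}^{\circ}\setminus D$, so that there are no extra multiple fibres in codimension one outside $D$. It also needs control of codimension-one loci where $f$ could acquire additional ramification. I would verify both from the explicit Seifert-type construction, in which $f$ is locally a quotient of a $\mathbb{G}_m$-torsor by a cyclic cover.
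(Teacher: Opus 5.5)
Your argument is correct, but it is not the proof the paper carries out. It is essentially the first of the two approaches listed at the start of Section 5, which the paper mentions and then sets aside.

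\textbf{The paper's route.} It pushes the curves down to $X$:
\begin{enumerate}
\item Theorem~\ref{theo:divisibleversion} turns relatively free curves in $\underline{Y}^{\mathrm{sm}}$ into a relatively $1$-free family of divisible Campana curves on $X$. This uses a general-position deformation argument and \cite[Lemma 4.7]{CLT26}.
\item Theorem~\ref{theo:pi1finite} then runs a Koll\'ar-type argument directly on $(X,D_\epsilon)$. It uses a map $e:\underline{S}\times\underline{C}\to\underline{X}$ with one fibre contracted to a point and another dominant, together with Campana's functoriality of $\pi_1^{\mathrm{orb}}$ for divisible orbifold morphisms \cite[Proposition 11.5]{Campana07}.
\end{enumerate}

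\textbf{Your route.} You compare $\underline{Y}$ and $\underline{X}$ once, at the level of fundamental groups, rather than step by step through Theorem~\ref{theo:induction}:
\begin{enumerate}
\item Divisibility of $f^*D_i$ sends meridians of components of $f^{-1}(D_i)$ to conjugates of powers of $\gamma_i^{m_i}$.
\item Equidimensionality makes your partial compactification have complement of codimension $\geq 2$ in $\underline{Y}^{\mathrm{sm}}$.
\item Dominance gives finite index, so $\pi_1(\underline{Y}^{\mathrm{sm}})\to\pi_1^{\mathrm{orb}}(X,D_\epsilon)$ has finite-index image.
\end{enumerate}

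\textbf{What each buys.} Your route avoids transferring freeness to Campana curves and avoids orbifold functoriality altogether. However, it puts all the weight on finiteness of $\pi_1(\underline{Y}^{\mathrm{sm}})$ for a klt log Fano pair. If you take that from \cite{Brauer}, you no longer have a new proof, since \cite[Theorem 2]{Brauer} already contains the statement. So you should obtain it from the curves of \cite{JLR25}, by the Koll\'ar argument of Theorem~\ref{theo:pi1finite} run on $\underline{Y}^{\mathrm{sm}}$ with no boundary. In return, the paper's route produces a criterion valid for any klt Campana orbifold carrying a relatively $1$-free family of divisible Campana curves, with no Fano hypothesis.

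\textbf{The obstacle you flag is not one.} For a dominant morphism of irreducible varieties with normal target, the image on $\pi_1$ has finite index regardless of whether the fibres are connected. Multiple fibres or extra ramification over divisors in $\underline{X}^{\circ}\setminus D$ are harmless, because those divisors lie inside $\underline{Y}^{\circ}$, where nothing has to be killed. The only input needed is that every component of $f^{-1}(D_i)$ occurs in $f^*D_i$ with multiplicity divisible by $m_i$, and Theorem~\ref{theo:auxilarilylogFanoIntro} provides this.

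\textbf{The Koll\'ar sketch.} The key point is not that $\pi_1(\underline{C})$ has finite-index image. It is that the images of $\pi_1(\underline{S}\times\{c_1\})$ and $\pi_1(\underline{S}\times\{c_2\})$ are conjugate. The first is trivial because that fibre is contracted, and the second has finite index because that fibre dominates.
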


Assuming standard conjectures, we can upgrade Theorem \ref{theo:main} to a statement about rational curves.  Precisely, we need the following well-known conjecture of \cite{KM99}.

\begin{conj}
\label{conj:freerationalcurveonsmoothlocus}
Let $(\underline{X}, \Delta)$ be a klt log Fano pair. Then there exists a smooth projective curve $\underline{C}$ of genus $0$ and a morphism $s : \underline{C} \to \underline{X}^{\mathrm{sm}}$ to the smooth locus of $\underline{X}$ such that $s$ is very free. 
\end{conj}

Assuming this conjecture, we have the following theorem:

\begin{theo}
\label{theo:mainII}
Assume Conjecture~\ref{conj:freerationalcurveonsmoothlocus}. Let $(X, \sum_{i } (1-\frac{1}{m_i})D_i)$ be a weak Fano orbifold. Then $(X, \sum_{i } (1-\frac{1}{m_i})D_i)$ is Campana rationally connected.
\end{theo}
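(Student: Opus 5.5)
The plan is to mimic the proof of Theorem~\ref{theo:main}, replacing the higher genus input from \cite{JLR25} with Conjecture~\ref{conj:freerationalcurveonsmoothlocus}.

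\emph{Reductions.} I would first reduce to the case where $(\underline{X}, D_\epsilon)$ is $\mathbb{Q}$-factorial. Take a small $\mathbb{Q}$-factorialization $\phi : \underline{X}' \to \underline{X}$, which exists because the pair is klt. It is an isomorphism in codimension one, so the strict transforms $D_i'$ give a weak Fano orbifold $(\underline{X}', D'_\epsilon)$ with $K_{\underline{X}'}+D'_\epsilon = \phi^*(K_{\underline{X}}+D_\epsilon)$. Campana rational connectedness only concerns curves through two general points, so it passes from $\underline{X}'$ down to $\underline{X}$: a curve through general points avoids the exceptional locus, and its contact orders with the $D_i$ are preserved.

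\emph{Construction.} Next apply Corollary~\ref{coro:auxilarilylogFano} (Theorem~\ref{theo:auxilarilylogFanoIntro}). This gives a klt log Fano pair $(\underline{Y}, \Delta)$ and a dominant equidimensional morphism $f : \underline{Y} \to \underline{X}$ with $f^*D_i = m_i E_i$ for integral reduced divisors $E_i$. By Conjecture~\ref{conj:freerationalcurveonsmoothlocus} there is a very free rational curve $s : \mathbb{P}^1 \to \underline{Y}^{\mathrm{sm}}$. Since very free curves deform to pass through two general points, and avoid any fixed codimension-two subset of the smooth locus, a general deformation has the following properties:
\begin{itemize}
\item it passes through two general points of $\underline{Y}$;
\item it avoids $f^{-1}$ of the non-log-smooth locus of $(\underline{X}, D)$, which has codimension at least two by equidimensionality of $f$;
\item it meets the $E_i$ only at points where $\underline{Y}$ is smooth;
\item it is not contained in $\bigcup E_i$.
\end{itemize}

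\emph{Conclusion.} Set $g = f\circ s$. Because $f$ is dominant, $g$ passes through two general points of $\underline{X}$. Its image lies in the log smooth locus of $(\underline{X}, D)$ and is not contained in $\operatorname{Supp} D$. For every point $p$ of $\mathbb{P}^1$ we have
\[
\operatorname{mult}_p(g^*D_i) = \operatorname{mult}_p(s^*f^*D_i) = m_i \operatorname{mult}_p(s^*E_i),
\]
which is either $0$ or at least $m_i$. Hence $g$ is a Campana rational curve through two general points, and $(\underline{X}, D_\epsilon)$ is Campana rationally connected.

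\emph{Main obstacle.} The delicate step is making $s^*E_i$ a well-defined Cartier pullback with $s^*f^*D_i = m_i s^*E_i$. This requires $E_i$ to be Cartier along the image of $s$, which holds because $s$ lands in $\underline{Y}^{\mathrm{sm}}$. It also requires the equality $f^*D_i = m_iE_i$ to hold as Weil divisors on $\underline{Y}$ near the curve, where $f^*D_i$ is defined since $D_i$ is Cartier on the log smooth locus. The other point needing care is checking that the codimension estimates genuinely allow the general deformation to avoid the bad loci; equidimensionality of $f$ is exactly what is needed for this.
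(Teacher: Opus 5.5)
Your proposal is correct and follows the paper's argument. The paper's proof of Theorem~\ref{theo:mainII} just says it is analogous to that of Theorem~\ref{theo:divisibleversion}: $\mathbb{Q}$-factorialize, pass to the auxiliary pair $(\underline{Y},\Delta)$ of Corollary~\ref{coro:auxilarilylogFano}, take a very free rational curve in $\underline{Y}^{\mathrm{sm}}$ (now supplied by Conjecture~\ref{conj:freerationalcurveonsmoothlocus} instead of \cite{JLR25}), deform it off codimension-two loci using equidimensionality of $f$, and compose with $f$ to get divisible Campana curves through two general points. You check the Campana condition and two-point connectivity directly, which is all that is needed here, rather than going through log very freeness as in \cite[Lemma 4.7]{CLT26}.

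One claim in your reduction step is false as stated: passing through general points does not by itself make a curve miss $\mathrm{Exc}(\phi)$. The fix is to add $f^{-1}(\mathrm{Exc}(\phi))$ to your list of loci avoided by a general deformation; it has codimension at least two in $\underline{Y}$ by equidimensionality of $f$.
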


\bigskip

\noindent
{\bf Acknowledgements:}
The authors would like to thank Jihun Park who gave an inspiring talk on Sasaki--Einstein metrics at the conference ``Fano varieties and Related topics'' held at Nagoya University. From his talk, the second author learned about Seifert bundles which led to the construction of auxiliary varieties in this paper. The authors would like to thank Saptarshi Dandapat and Joaqu\'in Moraga for comments on a draft of this paper. In particular, Joaqu\'in provided some helpful references to the literature and pointed out that it should be possible to give a quicker proof of Theorem~\ref{theo:auxilarilylogFanoIntro} using the language of $T$-varieties.

Brian Lehmann was supported in part by Simons Travel Support for Mathematicians, Award Number 851129.
Sho Tanimoto was partially supported by JST FOREST program Grant number JPMJFR212Z and by JSPS KAKENHI Grant-in-Aid (B) 23K25764.

\bigskip

\noindent
{\bf AI disclosure:}
ChatGPT-Astra was used to learn about the construction of Seifert bundles and polish the strategy in this paper. In more details, the development of Theorem~\ref{theo:auxilarilylogFanoIntro} is due to the authors, including the idea to use the construction of Seifert bundles, and Astra was assisting the authors by checking computations. However, the second author erroneously concluded that any Fano orbifold is Campana rationally connected from Theorem~\ref{theo:auxilarilylogFanoIntro}. Then Astra pointed out that singularities of the auxiliary log Fano variety invalidate this claim. AI was unaware of \cite{JLR25}, but of course the authors are aware of this paper. (The first author is an author of \cite{JLR25}.) Our theorems were obtained in this way. Astra also checked the correctness of the paper and pointed out a few minor issues which were fixed by the authors.
Astra was also used to proofread the paper.
The paper itself was written by the authors who are responsible for correctness. 

\section{Background}

We follow the notation of \cite{CLT26}. A variety is an integral separated scheme of finite type over $\mathbf k$ and we denote them by $\underline{X}$. Let $\underline{X}$ be a normal variety over $\mathbf k$ and $D$ be a reduced effective divisor on $\underline{X}$. Then we denote the log scheme associated to $(\underline{X}, D)$ by $(X, D)$ or simply $X$ if there is no confusion.
See \cite{CLT26} and references therein for more details on log schemes.

All projective bundles will denote bundles of quotients.

\subsection{Campana orbifolds and Campana curves}

The following notion was introduced by Campana in his study of special varieties, e.g., \cite{Campana04, Campana07}:

\begin{defi}
Let $\underline{X}$ be a normal projective variety defined over $\mathbf k$ and $D = \sum_{i=1}^{n} D_i$ be a reduced divisor on $\underline{X}$. Let $X$ be the log scheme associated to $(\underline{X}, D)$.
For each $i$, we pick a positive integer $m_i$ and put
\[
D_\epsilon = \sum_i\left(1 - \frac{1}{m_i}\right)D_i.
\]
We assume that $(\underline{X}, D_\epsilon)$ is a klt pair.
We call the pair $(X, D_\epsilon)$ a {\it klt Campana orbifold}.

A klt Campana orbifold $(X, D_\epsilon)$ is called a {\it Fano orbifold} if $-(K_{\underline{X}} + D_\epsilon)$ is ample and {\it weak Fano} if $-(K_{\underline{X}} + D_\epsilon)$ is big and nef.
\end{defi}

Next we introduce the notion of Campana curves:

\begin{defi}
Let $(X, D_\epsilon)$ be a Campana orbifold. Let $s : C \to X$ be a non-degenerate log map in the sense of \cite[Section 2.1]{CLT26} such that the underlying curve $\underline{C}$ is a smooth projective curve and the image is contained in the log smooth locus of $(X, D)$. Let $p_1, \cdots, p_r \in C$ be marked points, and let $\mathbf c_k = (c_{k, i})_i$ be its contact orders as in \cite[Section 2.1]{CLT26}. We assume that there are no non-contact markings.

We say $s$ is a {\it Campana curve} if for every $i, k$, we have either $c_{k, i} \geq m_i$ or $c_{k, i} = 0$.
A Campana curve $s$ is {\it divisible} if additionally for every $i, k$ we have $m_{i} | c_{k,i}$.

We say $s$ is a {\it Campana rational curve} if $C$ has genus $0$.
\end{defi}

The following notion was introduced by Campana:

\begin{defi}
Let $(X, D_\epsilon)$ be a Campana orbifold. We say $(X, D_\epsilon)$ is Campana rationally connected if there is a family of Campana rational curves $\mathcal U \to M$ with the evaluation map $\mathrm{ev} : \mathcal U \to X$ such that
\[
\mathcal U\times_M \mathcal U \to X \times X
\]
is dominant.
\end{defi}

\subsection{Relatively free Campana curves}

The following bundle controls the deformation theory of log maps:

\begin{defi}
Let $(X, D_\epsilon)$ be a Campana orbifold.
Let $s : C \to X$ be a non-degenerate log map such that the underlying curve $\underline{C}$ is a smooth projective curve and the image is contained in the log smooth locus of $(X, D)$. We assume that there are no non-contact markings.
By taking the graph we obtain an induced morphism
\[
\widetilde{s} : C \to X \times \underline{C}.
\]
Let $N_{\widetilde{s}}$ be the normal complex defined in \cite[Section 2.2.2]{CLT26}. This is a locally free sheaf by \cite[Lemma 2.6]{CLT26}.
\end{defi}

We come to the notion of free Campana curves:

\begin{defi} \label{defi:relativelyrfree}
Let $(X, D_\epsilon)$ be a Campana orbifold.
Let $s : C \to X$ be a Campana curve such that the image is contained in the log smooth locus of $(X, D)$. We fix $r \geq 0$.
We say $s$ is {\it relatively $r$-free} if the minimum slope of $N_{\widetilde{s}}$ is greater than or equal to $2g(C) + r$.  We rephrase ``relatively $0$-free'' as ``relatively free'' and ``relatively $1$-free'' as ``relatively very free.''
\end{defi}

\section{Construction of auxiliary log Fano varieties}

Let $\underline{X}$ be a $\mathbb Q$-factorial normal projective variety defined over $\mathbf k$ and $D = \sum_{i = 1}^n D_i$ be a reduced divisor on $\underline{X}$ such that each $D_i$ is a (possibly reducible) reduced divisor on $\underline{X}$. We fix (possibly distinct) multiplicities $m_i \in \mathbb Z_{\geq 1}$
 so that
$(\underline{X}, \sum_i(1-\frac{1}{m_i})D_i)$ is a klt Campana orbifold.  We prove the following theorem:

\begin{theo}
\label{theo:induction}
We fix $1 \leq j \leq n$.
Suppose that we have a klt log Fano pair $(\underline{Y}_j, \Delta)$ and a dominant equidimensional projective morphism
\[
f_j : \underline{Y}_j \to \underline{X}
\]
such that
\begin{itemize}
\item for $1 \leq i < j$, the divisor $\frac{1}{m_i}f_j^*D_i$ is an integral reduced divisor;
\item for $j \leq i \leq n$, $f_j^*D_i$ is an integral reduced divisor;
\item $\Delta \geq \sum_{i = j}^n (1-\frac{1}{m_i})f^*_jD_i$; and
\item there exists a $\mathbb Q$-Cartier divisor $A_j$ on $\underline{Y}_j$ such that $A_j$ is $f_j$-relatively ample and we have
\[
-(K_{\underline{Y}_j} + \Delta) \sim_{\mathbb Q} -f_j^*\left(K_{\underline{X}} + \sum_i\left(1-\frac{1}{m_i}\right)D_i\right) + A_j,
\]
which is ample.
\end{itemize}
Then there exist a klt log Fano pair $(\underline{Y}_{j+1}, \Delta')$ and a dominant equidimensional projective morphism
\[
f_{j+1}: \underline{Y}_{j+1} \to \underline{X}
\]
such that
\begin{itemize}
\item for $1 \leq i < j+1$, the divisor $\frac{1}{m_i}f_{j+1}^*D_i$ is an integral reduced divisor;
\item for $j+1 \leq i \leq n$, $f_{j+1}^*D_i$ is an integral reduced divisor.
\item $\Delta' \geq \sum_{i = j+1}^n (1-\frac{1}{m_i})f^*_{j+1}D_i$; and
\item there exists a $\mathbb Q$-Cartier divisor $A_{j+1}$ on $\underline{Y}_{j+1}$ such that $A_{j+1}$ is $f_{j+1}$-relatively ample and we have
\[
-(K_{\underline{Y}_{j+1}} + \Delta') \sim_{\mathbb Q} -f_{j+1}^*\left(K_{\underline{X}} + \sum_i\left(1-\frac{1}{m_i}\right)D_i\right) + A_{j+1},
\]
which is ample.
\end{itemize}
\end{theo}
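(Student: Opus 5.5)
The plan is to imitate the Seifert bundle construction of \cite{OW75, Kollar04}: first build a $\mathbb P^1$-bundle over $\underline{Y}_j$ attached to $E := f_j^*D_j$, then take a degree $m := m_j$ cyclic cover of it that is totally branched along the pullback of $E$ and along two disjoint sections.

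\textbf{Step 1: the $\mathbb P^1$-bundle.} Since $\underline{X}$ is $\mathbb Q$-factorial, $E$ is a reduced $\mathbb Q$-Cartier Weil divisor on $\underline{Y}_j$. Suppose first that $E$ is Cartier. Set $P = \mathbb P_{\underline{Y}_j}(\mathcal O \oplus \mathcal O(-E))$, with projection $p$ and disjoint sections $S_0, S_\infty$, normalized so that
\[
S_\infty - S_0 \sim p^*E .
\]
Then $p^*E + S_0 - S_\infty = \mathrm{div}(\varphi)$ for a rational function $\varphi$ on $P$, and $K_P = p^*K_{\underline{Y}_j} - S_0 - S_\infty$. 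In general $E$ is only $\mathbb Q$-Cartier, and I replace the bundle by the relative Proj of the bigraded algebra $\bigoplus_{a,b\ge 0}\mathcal O_{\underline{Y}_j}(aE)$. This algebra is finitely generated because $E$ is $\mathbb Q$-Cartier. The result is an equidimensional $\mathbb P^1$-fibration with two disjoint sections, and the same identities should hold as $\mathbb Q$-divisor statements away from codimension two. Making this rigorous is the step I expect to be the main technical obstacle. It requires checking normality of $P$, that $S_0$ and $S_\infty$ are $\mathbb Q$-Cartier, and the formula for $K_P$.

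\textbf{Step 2: the cyclic cover and its properties.} Let $\underline{Y}_{j+1}$ be the normalization of $P$ in $k(P)(\varphi^{1/m})$, with finite map $g$, and set $f_{j+1} = f_j\circ p\circ g$. The extension has degree $m$ because $\varphi$ vanishes to order $1$ along $S_0$. The map $f_{j+1}$ is dominant, projective and equidimensional, since $p$ is and $g$ is finite. The branch locus consists of the components of $p^*E$, $S_0$ and $S_\infty$, each with full ramification index $m$. This gives the following.
\begin{itemize}
\item $g^*p^*E$ is $m$ times a reduced divisor, so $\frac1{m_j}f_{j+1}^*D_j$ is integral and reduced.
\item For $i<j$, pullbacks of the reduced divisors $\frac1{m_i}f_j^*D_i$ stay reduced, because they share no components with the branch locus.
\item The same reasoning shows $f_{j+1}^*D_i$ is reduced for $i>j$.
\end{itemize}
The Hurwitz formula reads
\[
K_{\underline{Y}_{j+1}} = g^*\Bigl(K_P + \bigl(1-\tfrac1m\bigr)(p^*E + S_0 + S_\infty)\Bigr).
\]

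\textbf{Step 3: boundary, positivity and klt.} Fix a small rational $0<\varepsilon<\frac1m$ and define
\[
\Delta' = g^*\Bigl(p^*\bigl(\Delta - (1-\tfrac1m)E\bigr) + \bigl(\tfrac1m - \varepsilon\bigr)(S_0+S_\infty)\Bigr).
\]
This is effective, and the hypothesis $\Delta \ge \sum_{i\ge j}(1-\frac1{m_i})f_j^*D_i$ gives $\Delta'\ge\sum_{i>j}(1-\frac1{m_i})f_{j+1}^*D_i$. A direct computation gives
\[
-(K_{\underline{Y}_{j+1}}+\Delta') \sim_{\mathbb Q} g^*\Bigl(-p^*(K_{\underline{Y}_j}+\Delta) + \varepsilon(S_0+S_\infty)\Bigr).
\]
Substituting the hypothesis on $A_j$, this becomes $-f_{j+1}^*(K_{\underline{X}}+D_\epsilon) + A_{j+1}$ with
\[
A_{j+1} = g^*\bigl(p^*A_j + \varepsilon(S_0+S_\infty)\bigr).
\]
Since $S_0+S_\infty$ is $p$-ample, $A_{j+1}$ is $f_{j+1}$-ample. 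For $\varepsilon$ small, $-(K_{\underline{Y}_{j+1}}+\Delta')$ is ample, being the pullback under the finite map $g$ of an ample class on $P$.

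For klt, the divisor on $P$ whose $g$-crepant pullback is $(\underline{Y}_{j+1},\Delta')$ is
\[
B = p^*\Delta + \bigl(1-\varepsilon\bigr)(S_0+S_\infty).
\]
The pair $(P,B)$ is klt, because $p$ is locally a product with $\mathbb P^1$, $(\underline{Y}_j,\Delta)$ is klt, and $S_0,S_\infty$ are disjoint sections with coefficient $<1$. Then $(\underline{Y}_{j+1},\Delta')$ is klt by \cite[Proposition 5.20]{KM99}.
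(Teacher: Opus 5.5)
\paragraph{A gap in the non-Cartier case.} Your argument is complete only when $E=f_j^*D_j$ is Cartier. The theorem needs exactly the other case: $D_j$ is only $\mathbb{Q}$-Cartier on the $\mathbb{Q}$-factorial $\underline{X}$ (already for $j=1$, where $E=D_1$), and there you only assert that the identities ``should hold''.

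This is not cosmetic. Over a point where $E$ has Cartier index $r>1$, your bigraded $\mathrm{Proj}$ is locally $(\widetilde U\times\mathbb{P}^1)/\mu_r$, with $\widetilde U$ the index-one cover of $E$. So it is not locally a product, and it can have multiple fibres. Your klt argument in Step 3 (``$p$ is locally a product with $\mathbb{P}^1$'') is therefore false there. Normality of $P$ and $\mathbb{Q}$-Cartierness of $S_0,S_\infty$ also remain unproved. You need them for $K_P+B$ to be $\mathbb{Q}$-Cartier, for $S_0+S_\infty$ to be $p$-ample, and for transferring klt through the finite cover. The purely codimension-one statements ($\mathrm{div}(\varphi)$, the formula for $K_P$, reducedness of $p^*$) do extend from the Cartier locus.

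Your route can be completed through the quotient description: $\widetilde U\times\mathbb{P}^1\to P$ is étale in codimension one, so \cite[Proposition 5.20]{KM98} transfers klt. (Note that 5.20 is in \cite{KM98}, not \cite{KM99}.) As written, however, this local analysis is the missing part of the proof.

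\paragraph{The paper's fix.} The paper avoids the issue by keeping an honest $\mathbb{P}^1$-bundle and raising the degree of the cover. This grafts directly onto your argument:
\begin{itemize}
\item Pick $a$ with $aE$ Cartier and let $P=\mathbb{P}(\mathcal{O}\oplus\mathcal{O}(-aE))$, so that $S_\infty-S_0\sim a\,p^*E$.
\item Take $\varphi$ with $\mathrm{div}(\varphi)=a\,p^*E+S_0-S_\infty$, and normalize $P$ in $k(P)(\varphi^{1/(am)})$.
\item The ramification index is $am$ along $S_0$ and $S_\infty$, and $am/\gcd(am,a)=m$ along every component of $p^*E$. So $g^*p^*E$ is still $m$ times a reduced divisor.
\item Hurwitz gives $K_{\underline{Y}_{j+1}}=g^*\bigl(K_P+(1-\frac1m)p^*E+(1-\frac1{am})(S_0+S_\infty)\bigr)$.
\item Set $\Delta'=g^*\bigl(p^*(\Delta-(1-\frac1m)E)+(\frac1{am}-\varepsilon)(S_0+S_\infty)\bigr)$ with $0<\varepsilon<\frac1{am}$. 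This again gives $K_{\underline{Y}_{j+1}}+\Delta'=g^*\bigl(p^*(K_{\underline{Y}_j}+\Delta)-\varepsilon(S_0+S_\infty)\bigr)$.
\end{itemize}
The rest of your Step 3 then goes through unchanged. This is the paper's construction: a degree $am_j$ cover of $\mathbb{P}(\mathcal{L}\oplus\mathcal{O})$ branched along $Y_0+(am_j-1)Y_\infty+aq^*f_j^*D_j$.

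\paragraph{Where your version is better.} The remaining difference is in the positivity bookkeeping. The paper puts no boundary on $\widetilde Y_0$, so its $A_{j+1}$ contains $\frac1{am_j}\pi^*\xi\sim_{\mathbb{Q}}\widetilde Y_0$ with a coefficient that is not small. This forces it to twist by an ample $B$ (so that $\mathcal{L}$ is ample and $\xi$ is big and nef) and to argue via the augmented base locus. Your choice of a small coefficient $\varepsilon$ on both sections only needs $S_0+S_\infty$ to be $p$-ample, which is shorter.
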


\begin{proof}
Let $a$ be a positive integer such that $aD_j$ is Cartier.
Let $B$ be an ample divisor on $\underline{Y}_j$ such that $L = am_jB - af^*_jD_j$ is also ample.
Let $\mathcal L = \mathcal O(L)$. We define
\[
q : \underline{W} = \mathbb P_{\underline{Y}_j}(\mathcal L \oplus \mathcal O) \to \underline{Y}_j.
\]
Let $Y_0$ be the section corresponding to $\mathcal L \oplus \mathcal O \to \mathcal L$ and $Y_\infty$ be the section corresponding to $\mathcal L \oplus \mathcal O \to \mathcal O$.
Let $\xi = \mathcal O(1)$. Then we have
\begin{align*}
&K_{\underline{W}} \sim -2\xi + q^*(K_{\underline{Y}_j} + L) \sim -Y_0 - Y_\infty + q^*K_{\underline{Y}_j}\\
&Y_0 \sim \xi\\
&Y_\infty \sim \xi - q^*L.
\end{align*}
In particular, we have
\[
Y_0 + (am_j-1)Y_\infty + aq^*f^*_jD_j \sim am_jq^*B + am_jY_\infty.
\]
Let $\pi : \underline{Y}_{j+1} \to \underline{W}$ be the normalization of the degree $am_j$ cyclic covering ramified along $Y_0 + (am_j-1)Y_\infty + aq^*f_j^*D_j$.
We denote $\underline{Y}_{j+1} \to \underline{X}$ by $f_{j+1}$.
Choose $\widetilde{Y}_{0},\widetilde{Y}_{\infty},\widetilde{D}_{j}$ so that $\pi^*Y_0 = am_j\widetilde{Y}_0, \pi^*Y_\infty = am_j \widetilde{Y}_\infty, f_{j+1}^*D_j = m_j\widetilde{D}_j$.
Let $\epsilon > 0$ be a sufficiently small rational number.
Let
$$\Delta' = (1-\epsilon)\widetilde{Y}_\infty + \pi^*q^*\Delta - \left(1-\frac{1}{m_j}\right) f^*_{j+1}D_{j}$$
Then we have
\begin{align*}
K_{\underline{Y}_{j+1}} + \Delta' &\sim_{\mathbb Q} \pi^*K_{\underline{W}} +(am_j-1)(\widetilde{Y}_0 + \widetilde{Y}_\infty) +(m_j-1) \widetilde{D}_j + \Delta'\\
&\sim_{\mathbb Q} \pi^*q^*(K_{\underline{Y}_j} + \Delta) -\epsilon \widetilde{Y}_\infty - \frac{1}{am_j}\pi^*\xi\\
& \sim_{\mathbb Q} f_{j+1}^*\left(K_{\underline{X}} + \sum_i\left(1-\frac{1}{m_i}\right)D_i \right) - \epsilon \widetilde{Y}_\infty -\pi^*q^*A_j - \frac{1}{am_j}\pi^*\xi. 
\end{align*}
Thus let $A_{j + 1} = \pi^*q^*A_j + \frac{1}{am_j}\pi^*\xi + \epsilon \widetilde{Y}_\infty$.  By assumption $$-f_{j+1}^{*}\left(K_{\underline{X}} + \sum_i\left(1-\frac{1}{m_i}\right)D_i \right) + \pi^{*}q^{*}A_{j}$$ is the $(q \circ \pi )$-pullback of an ample divisor on $\underline{Y}_{j}$.  Furthermore by construction $\frac{1}{am_{j}}\pi^{*}\xi$ is a big and nef $\mathbb{Q}$-Cartier divisor whose augmented base locus maps finitely to $\underline{Y}_{j}$.  Thus their sum is an ample divisor on $\underline{Y}_{j+1}$.  Then assuming $\epsilon$ is sufficiently small, $-(K_{\underline{Y}_{j+1}} + \Delta' )$ is ample.
Moreover note that $\pi^*q^*A_j + \frac{1}{am_j}\pi^*\xi$ is relatively $f_{j+1}$-ample. Thus for a sufficiently small $\epsilon>0$, $A_{j + 1}$ is relatively $f_{j+1}$-ample.

On the other hand, we have 
\[
K_{\underline{Y}_{j+1}} + \Delta' =\pi^*\left(K_{\underline{W}} +\left(1-\frac{1}{am_j}\right)Y_0 + \left(1-\frac{\epsilon}{am_j}\right)Y_\infty + q^*\Delta\right).
\]
We claim that the pair $$\left(\underline{W},\left(1-\frac{1}{am_j}\right)Y_0 + \left(1-\frac{\epsilon}{am_j}\right)Y_\infty + q^*\Delta\right)$$ is klt.  This is clear away from $Y_{0} \cup Y_{\infty}$ and it also holds true on an open neighborhood of these divisors by inversion of adjunction.  
By \cite[Proposition 5.20]{KM98}, $(\underline{Y}_{j+1}, \Delta')$ is a klt pair.
Thus our assertion follows.
\end{proof}

As a corollary, we have the following statement:  

\begin{coro}
\label{coro:auxilarilylogFano}
Let $(\underline{X}, \sum_{i=1}^{n}(1-\frac{1}{m_i})D_i)$ be a $\mathbb{Q}$-factorial weak Fano orbifold. 
Then there exist a klt log Fano pair $(\underline{Y}, \Delta)$ and a dominant equidimensional projective morphism
\[
f: \underline{Y} \to \underline{X}
\]
such that
\begin{itemize}
\item for $1 \leq i \leq n$, the divisor $\frac{1}{m_i}f^*D_i$ is an integral reduced divisor; and
\item there exists a $\mathbb Q$-Cartier divisor $A$ on $\underline{Y}$ such that $A$ is $f$-relatively ample and we have
\[
-(K_{\underline{Y}} + \Delta) \sim_{\mathbb Q} -f^*\left(K_{\underline{X}} + \sum_i\left(1-\frac{1}{m_i}\right)D_i\right) + A,
\]
which is ample.
\end{itemize}
\end{coro}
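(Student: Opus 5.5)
The plan is to run Theorem~\ref{theo:induction} $n$ times, starting from a base case built from $\underline{X}$. The base case $j=1$ needs a klt log Fano pair $(\underline{Y}_1,\Delta)$ and a dominant equidimensional projective morphism $f_1\colon \underline{Y}_1\to\underline{X}$ such that:
\begin{itemize}
\item each $f_1^*D_i$ is integral and reduced;
\item $\Delta\ge\sum_i(1-\frac1{m_i})f_1^*D_i$;
\item $-(K_{\underline{Y}_1}+\Delta)\sim_{\mathbb Q}-f_1^*(K_{\underline{X}}+D_\epsilon)+A_1$ with $A_1$ relatively $f_1$-ample and the sum ample.
\end{itemize}
Applying the theorem for $j=1,\dots,n$ then produces $(\underline{Y}_{n+1},\Delta')$ and $f_{n+1}$ with $\frac1{m_i}f_{n+1}^*D_i$ integral and reduced for every $i$. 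The condition on $\Delta'$ becomes vacuous at $j=n+1$, and the ampleness and relative ampleness statements are exactly those required by the corollary. So we set $\underline{Y}=\underline{Y}_{n+1}$, $f=f_{n+1}$, $A=A_{n+1}$.

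The only real work is the base case, since $-(K_{\underline{X}}+D_\epsilon)$ is merely big and nef, not ample. Taking $f_1=\mathrm{id}$ and $A_1=0$ fails. Instead I would perturb the boundary. Because the divisor is big and nef, Kodaira's lemma gives an effective $\mathbb Q$-divisor $E$ with $-(K_{\underline{X}}+D_\epsilon)-\delta E$ ample for every small rational $\delta>0$; this uses $\mathbb Q$-factoriality so that $E$ is $\mathbb Q$-Cartier. Set $\Delta=D_\epsilon+\delta E$. For small $\delta$ the pair $(\underline{X},\Delta)$ is still klt, since klt is an open condition under adding a small multiple of an effective $\mathbb Q$-Cartier divisor. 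Then $-(K_{\underline{X}}+\Delta)$ is ample and $\Delta\ge D_\epsilon$.

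The relation in the base case should be $-(K_{\underline{X}}+\Delta)\sim_{\mathbb Q}-(K_{\underline{X}}+D_\epsilon)+A_1$ with $A_1=-\delta E$. Relative ampleness of $A_1$ over $\mathrm{id}$ is automatic, since every $\mathbb Q$-Cartier divisor is relatively ample for an isomorphism. The identity is dominant, equidimensional and projective, and $f_1^*D_i=D_i$ is integral and reduced. So all hypotheses of Theorem~\ref{theo:induction} at $j=1$ hold.

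The one point to check is whether the proof of Theorem~\ref{theo:induction} uses anything about $A_j$ beyond relative ampleness. In that proof, $-f_{j+1}^*(K_{\underline{X}}+D_\epsilon)+\pi^*q^*A_j$ is the pullback of the ample divisor $-(K_{\underline{Y}_j}+\Delta)$, and no sign condition on $A_j$ appears. So the non-effective $A_1$ is harmless. If one insisted on extracting an "ample plus relatively ample" form instead, the fallback would be to note that the corollary only records the stated linear equivalence, which the induction preserves verbatim.
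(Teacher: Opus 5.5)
Your proposal is correct and is essentially the paper's proof: the paper likewise takes $\underline{Y}_1=\underline{X}$, $f_1=\mathrm{id}$, $\Delta_1=E+\sum_i(1-\frac{1}{m_i})D_i$ with this pair klt Fano, and $A_1=-E$. Your Kodaira-lemma choice of $\delta E$ just makes $E$ explicit, and the paper then applies Theorem~\ref{theo:induction} repeatedly, with no sign condition on $A_1$ needed, exactly as you observed.

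One small correction: Kodaira's lemma already produces $E$ as a $\mathbb{Q}$-Cartier divisor, so $\mathbb{Q}$-factoriality is not needed in the base case; it is used inside Theorem~\ref{theo:induction}, to make $aD_j$ Cartier.
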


\begin{proof} 
Since $(\underline{X}, \sum_i (1-\frac{1}{m_i})D_i)$ is klt weak Fano, there is an effective $\mathbb{Q}$-divisor $E$ such that $(\underline{X}, E + \sum_i (1-\frac{1}{m_i})D_i)$ is klt Fano.  To start, we define $\underline{Y}_1 = \underline{X}$, $f_1$ to be the identity, $\Delta_1 = E + \sum_i(1-\frac{1}{m_i})D_i$, and $A_1 = -E$.   Then a repeated application of Theorem~\ref{theo:induction} yields $f$.
\end{proof}

\section{Existence of free Campana curves}

We prove our main theorems.  The first is a slight strengthening of Theorem \ref{theo:main}.

\begin{theo} \label{theo:divisibleversion}
Let $(\underline{X}, \sum_{i } (1-\frac{1}{m_i})D_i)$ be a weak Fano orbifold. Then there exists a smooth projective curve $\underline{C}$ such that for any $r \geq 0$, there exists a Campana curve $s : \underline{C} \to (\underline{X}, D)^{\mathrm{sm}}$ to the log smooth locus of $(\underline{X}, D)$ such that $s$ is relatively $r$-free and divisible.
\end{theo}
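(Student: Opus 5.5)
The plan is to pull relatively free curves back from an auxiliary log Fano pair. First I reduce to the $\mathbb{Q}$-factorial case. Take a small $\mathbb{Q}$-factorialization $\phi:\underline{X}'\to\underline{X}$, which exists since klt pairs admit one by BCHM. Then $(\underline{X}',\sum(1-\frac1{m_i})D_i')$ is still a klt weak Fano orbifold, because $\phi$ is small and $K+D_\epsilon$ pulls back to the corresponding divisor. A curve avoiding the $\phi$-exceptional locus and lying in the log smooth locus of $(\underline{X}',D')$ maps to the log smooth locus of $(\underline{X},D)$, and there $\phi$ is an isomorphism. So we may assume $\underline{X}$ is $\mathbb{Q}$-factorial, provided the curves found below can be taken in general position, i.e.\ avoiding a fixed codimension-two locus.

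Next apply Corollary~\ref{coro:auxilarilylogFano} to get $f:\underline{Y}\to\underline{X}$ with $(\underline{Y},\Delta)$ klt log Fano and each $\frac1{m_i}f^*D_i$ integral. By \cite[Theorem 1.3]{JLR25} there is a curve $\underline{C}$ such that for every $r'$ there are relatively $r'$-free maps $t:\underline{C}\to\underline{Y}^{\mathrm{sm}}$. Since these curves are free, they deform to pass through a general point. Hence I can take $t$ to meet a prescribed closed subset $Z\subset \underline{Y}$ only in codimension-one behaviour. Concretely, $t(\underline C)$ avoids every codimension-$\ge 2$ subset, in particular $f^{-1}(\mathrm{Sing}(\underline X,D))\cap$ (codim $\geq 2$ parts), the non-log-smooth locus of $f$, and the singular locus of $\mathrm{Supp} f^*D$. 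Here equidimensionality of $f$ ensures that preimages of codimension-two loci of $\underline X$ have codimension two in $\underline Y$. I also need $t(\underline C)\not\subset \mathrm{Supp} f^*D$. Then $s=f\circ t$ is a curve in the log smooth locus of $(\underline X,D)$, meeting each $D_i$ transversally in the sense of log maps. Its contact orders are the intersection multiplicities of $t$ with $f^*D_i=m_i\cdot(\text{integral divisor})$, so each is divisible by $m_i$. Thus $s$ is a divisible Campana curve.

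The main step is comparing positivity: I must show $N_{\widetilde s}$ has minimal slope $\geq 2g+r$ once $t$ is relatively $r'$-free for $r'$ large. Over the generic point of $t(\underline C)$, $f$ is a smooth morphism, and near $t(\underline C)$ it is log smooth from $(\underline Y, f^*D_{\mathrm{red}})$ to $(\underline X,D)$, because codimension-one points of $\mathrm{Supp} f^* D$ map to smooth points of $D$ where the ramification is exactly the $m_i$-fold one. So there should be a surjection of log tangent bundles $T_{\underline Y}(-\log f^*D)|_{\underline C}\to s^*T_{\underline X}(-\log D)$ away from finitely many points. The issue is exactly at those finitely many points, and I expect this to be the obstacle; I would try to rule them out by choosing $t$ to avoid the codimension $\geq 2$ locus where $f$ fails to be log smooth. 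Combined with the relation between $N_{\widetilde t}$ and the log tangent bundle (log normal complex $=T^{\log}$ modulo $T_C$ twisted by markings, as in \cite{CLT26}), the surjection gives $N_{\widetilde t}\to N_{\widetilde s}$ generically surjective. I must also handle the difference between $N_{\widetilde t}$ for $\underline Y$ with no boundary and the log version: the log tangent sheaf differs from $T_{\underline Y}$ by a twist supported on the marked points, costing at most a bounded amount of slope. Since quotients of a bundle with minimal slope $\mu$ have minimal slope $\geq \mu$ (for generically surjective maps, minus a torsion correction bounded by the degree of the cokernel), choosing $r'$ sufficiently large relative to $r$ and these bounded corrections gives relative $r$-freeness of $s$. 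A cleaner alternative is to use the boundary $f^*D_{\mathrm{red}}$ in the JLR input directly, and I would try that first.
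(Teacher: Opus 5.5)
The reduction to the $\mathbb{Q}$-factorial case and the verification that $s=f\circ t$ is a divisible Campana curve are fine, and agree with the paper. The gap is in the positivity step, which does not close as written.

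You absorb a ``bounded'' correction by taking $r'$ large. This correction is the twist between $t^*T_{\underline Y}$ and $t^*T_{\underline Y}(-\log f^*D_{\mathrm{red}})$ at the markings, together with the cokernel of $N_{\widetilde t}\to N_{\widetilde s}$. It is controlled by the number of points of $t^{-1}(\Supp f^*D)$. On the fixed curve $\underline C$, raising $r'$ forces $\deg t^*T_{\underline Y}\ge \dim \underline Y\,(2g+r')$. So the degree of $t$, and in general the number of markings, grows with $r'$. There is no bound independent of $r'$, and ``choose $r'$ large'' is circular.

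Separately, you justify log smoothness of $f$ along $t(\underline C)$ only at codimension-one points of $\Supp f^*D$. Generic smoothness does not exclude a divisor outside $f^{-1}(D)$ along which $f$ fails to be smooth. You leave this as an acknowledged obstacle.

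Both problems disappear once you see that no correction is needed.

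\emph{Slope comparison.} If $\mathcal F\to\mathcal G$ is a generically surjective map of vector bundles on a curve, then $\mu_{\min}(\mathcal G)\ge\mu_{\min}(\mathcal F)$ exactly. Indeed, the image of $\mathcal F$ in the last Harder--Narasimhan quotient of $\mathcal G$ is a quotient of $\mathcal F$ of the same rank and no larger degree.

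\emph{Local containment.} Take $t$ general, so it meets $\Supp f^*D$ transversally at smooth points where $f^*D_i=m_i\widetilde D_i$ with $\widetilde D_i=(y=0)$ Cartier. Embed $N_{\widetilde s}$ in $s^*T_{\underline X}$ via $(w,v)\mapsto w-ds(v)$, viewing it as the cokernel of $T_C^{\log}\to s^*T_{\underline X}(-\log D)\oplus T_{\underline C}$. Near a contact point it is then generated by $\tau^{m_i-1}\partial_z$ and the $\partial_{x_k}$, where $z$ is a local equation of $D_i$ and $\tau$ a local parameter on $\underline C$. Writing $f^*z=y^{m_i}e$ with $e$ a unit shows $df(t^*T_{\underline Y})\subseteq N_{\widetilde s}$. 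Equivalently, for transverse contact the log normal bundle of $\widetilde t$ is just $t^*T_{\underline Y}$.

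\emph{Conclusion.} Generic smoothness of $f$, together with $t$ passing through a general point, makes $t^*T_{\underline Y}\to N_{\widetilde s}$ generically surjective. Hence $\mu_{\min}(N_{\widetilde s})\ge\mu_{\min}(t^*T_{\underline Y})\ge 2g+r$ already with $r'=r$, and no log smoothness of $f$ is needed.

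With this repair your route is a valid alternative to the paper's. The paper never compares bundles. It takes $t$ relatively $(2g+r)$-free and uses \cite[Lemma 3.6]{LRT23} to make the graphs pass through $2g+r+1$ general points of $\underline Y\times\underline C$. Pushing forward along the dominant $f$ gives divisible Campana curves whose graphs interpolate $2g+r+1$ general points of $X\times\underline C$. Relative $r$-freeness of a general member then follows from the proof of \cite[Lemma 4.7]{CLT26}. Your repaired argument is more direct and needs only $r'=r$; the paper's avoids all local computation with the log normal complex.
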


\begin{proof}
Since we can deform relatively $r$-free curves away from codimension $2$ subsets, it suffices to prove the statement after replacing $(\underline{X}, \sum_{i } (1-\frac{1}{m_i})D_i)$ by a $\mathbb{Q}$-factorialization.
Let $(\underline{Y}, \Delta)$ be an auxiliary log Fano pair as in Corollary \ref{coro:auxilarilylogFano} with a dominant morphism $f : \underline{Y} \to \underline{X}$.
By \cite[Theorem 1.3]{JLR25}, we find a relatively $(2g(C) + r)$-free curve $s : \underline{C} \to \underline{Y}^{\mathrm{sm}}$. It follows from \cite[Lemma 3.6]{LRT23} that a general deformation of $\widetilde{s} : \underline{C} \to \underline{Y}^{\mathrm{sm}} \times \underline{C}$ goes through $2g(C) + r + 1$ general points on $\underline{Y}^{\mathrm{sm}} \times \underline{C}$.
Moreover, the relatively free condition implies that for any closed sublocus $\underline{Z} \subset \underline{Y}$ of codimension $\geq 2$ a general deformation of $s$ will be disjoint from $\underline{Z}$.  

Consider the composition $f \circ s : \underline{C} \to \underline{X}$.  Since $f$ is equidimensional, after perhaps replacing $s$ by a general deformation we may assume that $f \circ s(\underline{C})$ lies in the log smooth locus of $(X,D)$ and does not meet any intersection point of two irreducible components of $D$.  Then one can induce the minimal log structure on $\underline{C}$ to obtain a log curve
\[
s' : C \to X.
\]
Since $f^*D_i$ is divisible by $m_i$ and $s(\underline{C}) \subset \underline{Y}^{\mathrm{sm}}$, this composition $s'$ is a divisible Campana curve.  Moreover there is a deformation of $s'$ whose graph goes through any $2g(C) + r + 1$ general points of $X \times \underline{C}$. It follows from the proof of \cite[Lemma 4.7]{CLT26} that a general deformation of $s'$ is relatively $r$-free. Thus our assertion follows.
\end{proof}

\begin{proof}[Proof of Theorem~\ref{theo:main}]
Follows immediately from Theorem \ref{theo:divisibleversion}.
\end{proof}

\begin{proof}[Proof of Theorem~\ref{theo:mainII}]
The proof is similar to the proof of Theorem~\ref{theo:main}.
\end{proof}

\section{Orbifold fundamental groups}

In this section we work over $\mathbb{C}$.  We will study the following notion:

\begin{defi}
Let $(X,D_{\epsilon})$ be a klt Campana orbifold.  The {\it orbifold fundamental group} is defined to be
\begin{equation*}
\pi_{1}^{\mathrm{orb}}(X,D_{\epsilon}) = \pi_{1}(\underline{U}) / \llangle \gamma_{i}^{m_{i}} \rrangle_{i}
\end{equation*}
where $U$ is the complement of $\Supp(D_{\epsilon})$ in the log smooth locus of $(X,D)$, $\gamma_{i}$ is the loop around the irreducible component $D_{i}$, and $\llangle \gamma_{i}^{m_{i}} \rrangle_{i}$ is the normal closure of the subgroup generated by $\gamma_{i}^{m_{i}}$ for all $i$.
\end{defi}

\cite{Brauer} shows that the orbifold fundamental group of a weak log Fano pair is finite.  The argument in this paper gives an alternative proof.  In fact, there are two ways to proceed:

\begin{enumerate}
\item It is well-known that fundamental groups are compatible with taking cones as in local-to-global arguments.  Similarly, the Seifert-type inductive construction in Theorem \ref{theo:induction} is easily seen to satisfy a natural compatibility with orbifold fundamental groups.  In this way one can reduce finiteness to a question concerning the variety $\underline{Y}$.

\item One can directly use the existence of relatively $r$-free families of curves to prove finiteness of the orbifold fundamental group.
\end{enumerate}

As the first approach is well-understood, we will briefly describe the second.

\begin{theo} \label{theo:pi1finite}
Let $(X,D_{\epsilon})$ be a klt Campana orbifold.  Suppose that it admits a relatively $1$-free family of divisible Campana curves.  Then $\pi_{1}^{\mathrm{orb}}(X,D_{\epsilon})$ is finite.
\end{theo}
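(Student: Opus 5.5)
We follow the strategy of \cite{JLR25} (itself modeled on Campana and Koll\'ar's argument that rationally connected varieties are simply connected). The plan is to show that every connected finite étale orbifold cover of $(X,D_\epsilon)$ has bounded degree, and that $\pi_1^{\mathrm{orb}}$ is a quotient of a finitely generated group with finite image of a fixed curve group.

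\textbf{Step 1: curves surject onto the orbifold group up to finite index.} Let $s:\underline{C}\to X$ be a general member of the relatively $1$-free family of divisible Campana curves. Since $s$ is divisible, every local intersection multiplicity of $s(\underline{C})$ with $D_i$ is a multiple of $m_i$. Hence the small loop on $\underline{C}$ around a point of $s^{-1}(D_i)$ maps to a power $\gamma_i^{c}$ with $m_i\mid c$, which is trivial in $\pi_1^{\mathrm{orb}}$. Thus $s$ induces a homomorphism
\[
s_*:\pi_1(\underline{C})\to \pi_1^{\mathrm{orb}}(X,D_\epsilon).
\]
Because the family is relatively free, it dominates $X\times\underline{C}$ and the graphs pass through general points. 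The standard argument (\cite[Section 5]{JLR25}, in the spirit of Koll\'ar's \emph{Shafarevich maps} results) then shows that the image of $s_*$ has finite index. To set this up, take the universal family $\mathcal U\to M$ over an irreducible component with fiber $\underline{C}$. Restrict to the open set where the family lies in $U$ away from $D$, away from finitely many points of each curve, and is smooth. Homotopy lifting along the dominant map $\mathcal U\to X$, which has irreducible general fibers thanks to the freeness, together with the fact that a general fiber of $\mathcal U\to M$ meets everything, gives that the image of $\pi_1(\mathcal U^\circ)\to\pi_1^{\mathrm{orb}}$ has finite index. Moreover, $\pi_1$ of a fiber surjects onto a finite-index subgroup of $\pi_1(\mathcal U^\circ)$ modulo the image of $\pi_1(M^\circ)$.

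\textbf{Step 2: the curve classes are killed up to bounded index.} Now use the relatively $1$-free condition. For a fixed point $c\in\underline{C}$ and a general $x\in X$, the curves with $s(c)=x$ form a family dominating $X\times\underline{C}$. Contracting the base, loops in $\underline{C}$ can therefore be homotoped through the family into loops based in a fiber over $x$. The evaluation at the fixed point $c$ is constant, so this homotopy kills them. This is the key step: showing that the image of $s_*$ is finite. The idea is to consider any connected finite étale orbifold cover $X'\to X$ of degree $d$. Because of divisibility, each Campana curve lifts after pulling back to an étale cover $\underline{C}'\to\underline{C}$, and the lift is still relatively $1$-free on $X'$.

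Two relatively free curves through a common general point with fixed $c$ imply that the induced cover $\underline{C}'\to\underline{C}$ is trivial over the family through $x$. This forces the number of connected components of $s^{-1}(X')$ to equal $d$ once $d$ is bounded by an invariant such as $\deg$ of the family. The cleaner route is as in \cite{JLR25}: show that $\pi_1^{\mathrm{orb}}$ has only finitely many subgroups of each index and that every finite quotient has order bounded by a quantity depending on $\underline{C}$ and the curve class. Then conclude via residual finiteness issues, by working with profinite completions and then using finite generation plus an Euler-characteristic or volume bound. Alternatively, one can pass to the universal cover directly, as a possibly infinite analytic orbifold cover, and argue that the lifted family of compact curves forces compactness.

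\textbf{Main obstacle.} I expect the main difficulty to be Step 2: upgrading "finite index image" to "finite group". This requires showing that the preimage of $X$ in the orbifold universal cover is compact. I would first try the Campana–Koll\'ar chain argument. Take two general points of the universal cover. Join their images in $X$ by a single relatively $1$-free divisible Campana curve, which exists since the graph passes through $2g+2$ general points. The curve lifts to the universal cover because its loops die by Step 1 combined with $1$-freeness. Any two points are then joined by the lift of one compact curve of bounded degree, so the universal cover has bounded diameter and is hence a finite cover.
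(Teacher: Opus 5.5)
There is a genuine gap. Your argument is built around the loops of $\underline{C}$. Step 1 claims $s_*\pi_1(\underline{C})$ has finite index, and your closing argument needs every curve to lift to the orbifold universal cover, i.e.\ $s_*\pi_1(\underline{C})=1$.

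The lifting claim is false in general. Take $\underline{X}=\mathbb{P}^1$, $D=\{0\}+\{\infty\}$, $m_1=m_2=m$, so $\pi_1^{\mathrm{orb}}=\mathbb{Z}/m$. A divisible Campana curve is a function $f$ on $\underline{C}$ with $\mathrm{div}(f)=m(E_0-E_\infty)$. Suppose the torsion class $\mathcal{O}(E_0-E_\infty)$ has exact order $m$. This is a discrete invariant, constant in the family, and it is compatible with relative $r$-freeness when $E_0,E_\infty$ are reduced of large degree. Then the pullback of the cover $w\mapsto w^m$ is connected, so $s_*$ is onto $\mathbb{Z}/m$ and no curve lifts.

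Step 1 is also unsupported. Dominance of the family only makes $s_*\pi_1(\underline{C})$ normal in the finite-index image of $\pi_1(\mathcal{U}^\circ)$. The quotient is governed by $\pi_1(M^\circ)$, which is exactly the part you never control. The profinite fallback cannot close the argument either: bounding finite quotients only bounds the profinite completion, and these groups are not known a priori to be residually finite. Finally, the ``bounded diameter implies finite'' step has no justification for a non-compact cover of $U$.

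The missing idea is that the loops to be killed live in the parameter space, not in $\underline{C}$. In your Step 2 the roles of base and fiber are reversed. The argument goes as follows.
\begin{enumerate}
\item Fix a general $x\in\underline{X}$ and $c_1\in\underline{C}$. Let $\underline{S}$ (smooth, irreducible) parametrize members of the family with $s(c_1)=x$, and let $e:\underline{S}\times\underline{C}\to\underline{X}$ be the evaluation map.
\item Relative $1$-freeness gives a point $c_2$ with $e|_{\underline{S}\times\{c_2\}}$ dominant.
\item After shrinking $\underline{S}$, $e^{-1}(D)$ is a disjoint union of smooth multisections transverse to the slices. Divisibility, through \cite[Proposition 11.5]{Campana07}, kills the loops around these multisections. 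So $e$ induces a homomorphism $\pi_1(\underline{S})\times\pi_1(\underline{C})\to\pi_1^{\mathrm{orb}}(X,D_\epsilon)$.
\item In the source, the slices $\underline{S}\times\{c_1\}$ and $\underline{S}\times\{c_2\}$ give conjugate subgroups, since both are the $\pi_1(\underline{S})$ factor.
\item The first slice has trivial image, because $e$ contracts it to $x$. The second has finite-index image, because it dominates $\underline{X}$.
\end{enumerate}
Hence $\pi_1^{\mathrm{orb}}(X,D_\epsilon)$ is finite, and nothing about $s_*\pi_1(\underline{C})$ is ever needed. When $\underline{C}=\mathbb{P}^1$ this is exactly the Campana--Koll\'ar argument you had in mind; there, simple connectivity of $\mathbb{P}^1$ hides the fact that only the $\pi_1(\underline{S})$ factor matters.
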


\begin{proof}
For a non-degenerate log curve $s: C \to X$, we can equip $C$ with an orbifold structure by assigning the multiplicity $1$ to each marked point.  With this convention, the divisibility assumption implies that each morphism $s : (\underline{C}, 0) \to (\underline{X}, D_\epsilon)$ is a divisible orbifold morphism in the sense of \cite[Definition 2.4]{Campana07}.  By the relatively $1$-free assumption, the family of such log curves goes through two general points on $X$.

Forgetting the log structure, we obtain a subvariety $\underline{M} \subset \Mor(\underline{C},\underline{X})$ and a universal family $\mathcal{C} = \underline{M} \times \underline{C}$ with an evaluation map $ev: \underline{\mathcal{C}} \to \underline{X}$ satisfying $\underline{\mathcal{C}} \times_{\underline{M}} \underline{\mathcal{C}} \to \underline{X} \times \underline{X}$ is dominant and contact orders are constant.
In particular, if we fix a general point $x \in \underline{X}$, then we can construct a map $e: \underline{S} \times \underline{C} \to \underline{X}$ satisfying:
\begin{itemize}
\item $\underline{S}$ is smooth irreducible,
\item there is a fiber $S_{1}$ of the projection map $\underline{S} \times \underline{C} \to \underline{C}$ that is contracted to the point $x$ by $e$, and
\item there is a fiber $S_{2}$ of the projection map $\underline{S} \times \underline{C} \to \underline{C}$ such that $e$ maps $S_{2}$ dominantly to $\underline{X}$.
\end{itemize}
After possibly shrinking $\underline{S}$, we may ensure that $\Supp(e^{*}D_{\epsilon})$ consists of disjoint smooth multi sections $T_{1},\ldots,T_{k}$ that meet both $S_{1}$ and $S_{2}$ transversally.  We may also assume that $e(\underline{S}\times \underline{C})$ lies in the locus where $(X,D)$ is log smooth.
Set $\underline{V} = (\underline{S} \times \underline{C}) \backslash (\cup_{j} T_{j})$.  
Then we can identify an exact sequence
\begin{equation*}
1 \to \llangle \sigma_{j} \rrangle \to \pi_{1}(\underline{V})  \to \pi_{1}(\underline{S}) \times \pi_{1}(\underline{C}) \to 1
\end{equation*}
where the $\sigma_{j}$ are loops around the $T_{j}$ and $\llangle - \rrangle$ denotes the normal closure.  
 We can give $(\underline{S} \times \underline{C}, \sum T_{j})$ the structure of a klt Campana orbifold by assigning the multiplicity $1$ to each irreducible component.  Thus we have an identification $\pi_{1}^{\mathrm{orb}}(S\times C,0) \cong \pi_{1}(\underline{S}) \times \pi_{1}(\underline{C})$.

Note that $e: (S\times C,0) \to (X,D_{\epsilon})$ is a divisible orbifold morphism.  Since the domain is smooth and maps into the SNC locus of $(X,D_{\epsilon})$, we may apply \cite[Proposition 11.5]{Campana07} to obtain a natural homomorphism
\[
\pi_{1}(\underline{S}) \times \pi_{1}(\underline{C}) \cong \pi_{1}^{\mathrm{orb}}(S\times C, 0) \to \pi_{1}^{\mathrm{orb}}(X,D_{\epsilon}).
\]
The images of $\pi_{1}(\underline{S}_{1})$ and $\pi_{1}(\underline{S}_{2} )$ are conjugate to each other in $\pi_{1}(\underline{S}) \times \pi_{1}(\underline{C})$.
 Since $e : \underline{S}_2 \to \underline{X}$ is dominant, we have that $e_{*}\pi_{1}(V \cap \underline{S}_{2})$ is a finite index subgroup of $\pi_{1}(\underline{X}^{sm} \backslash \mathrm{Supp}(D_\epsilon))$.  Passing to orbifold fundamental groups by taking quotients, we see that $e_{*}\pi_{1}(\underline{S}_{2})$ is a finite index subgroup of $\pi_{1}^{\mathrm{orb}}(X,D_{\epsilon})$.  However, since $e$ contracts the section $\underline{S}_{1}$ to a point, by the conjugacy relation proved earlier $e_{*}\pi_{1}(\underline{S}_{2})$ is trivial. Thus $\pi_{1}^{\mathrm{orb}}(X,D_{\epsilon})$ is finite.
\end{proof}

\begin{proof}[Proof of Theorem \ref{intro:orbfundfinite}:]
Combine Theorem \ref{theo:divisibleversion} and Theorem \ref{theo:pi1finite}.
\end{proof}

\bibliographystyle{alpha}
\bibliography{CampanaCurveIII}

\end{document}